\newtheorem{theorem}{Theorem}
\newtheorem{lemma}{Lemma}
\newcommand{\squeezeup}{\vspace{-2.5mm}}
\begin{document}
\title{Learning Topology of Distribution Grids using only Terminal Node Measurements}
\author{\IEEEauthorblockN{Deepjyoti~Deka\dag, Scott~Backhaus\dag, and Michael~Chertkov\dag\\}
\IEEEauthorblockA{\dag Los Alamos National Laboratory, USA\\
Email: deepjyoti@lanl.gov, backhaus@lanl.gov, chertkov@lanl.gov}}

\maketitle

\begin{abstract}
Distribution grids include medium and low voltage lines that are involved in the delivery of electricity from substation to end-users/loads. A distribution grid is operated in a radial/tree-like structure, determined by switching on or off lines from an underling loopy graph. Due to the presence of limited real-time measurements, the critical problem of fast estimation of the radial grid structure is not straightforward. This paper presents a new learning algorithm that uses measurements only at the terminal or leaf nodes in the distribution grid to estimate its radial structure. The algorithm is based on results involving voltages of node triplets that arise due to the radial structure. The polynomial computational complexity of the algorithm is presented along with a detailed analysis of its working. The most significant contribution of the approach is that it is able to learn the structure in certain cases where available measurements are confined to only half of the nodes. This represents learning under minimum permissible observability. Performance of the proposed approach in learning structure is demonstrated by experiments on test radial distribution grids.
\end{abstract}

\begin{IEEEkeywords}
Distribution Networks, Power Flows, Tree learning, Voltage measurements, Missing data, Complexity
\end{IEEEkeywords}
\section{Introduction}
\label{sec:intro}
The power grid is operationally divided hierarchically into transmission and distribution grids. While the transmission grid connects the generators and includes high voltage lines, the distribution grid comprises of medium and low voltage lines that connect the distribution substation to the end users/loads. Aside from low voltages, distribution grids are structurally distinguished from the transmission side by their operational radial structure. A distribution grid is operated as a tree with a substation placed at the root node/bus that is connected via intermediate nodes and lines to the terminal end nodes/households. This radial operational structure is derived from an underlying loopy network by switching on and off some of the lines (network edges) \cite{distgridpart1}. The specific structure may be changed from one radial configuration to another, by reversing the switch statuses. Fig.~\ref{fig:picHinv} presents an illustrative example of a radial distribution grid derived from an underlying graph.

Historically, the distribution grid has had limited presence of real time devices on the lines, buses and feeders \cite{hoffman2006practical}. Due to this, real-time monitoring of the operating radial structure and the states of the resident buses on the distribution side is not straightforward. These estimation problems, previously neglected, have become of critical importance due to introduction of new devices like batteries and electric vehicles, and intermittent generation resources like rooftop solar panels. Optimal control in today's distribution grid requires fast topology and state estimation, often from limited real-time measurements. In this context, it needs to be mentioned that smart meters, micro-PMUs \cite{micropmu}, frequency measurements devices (FNETs)\cite{fnet} and advanced sensors (internet-of-things) capable of reporting real-time measurements are being deployed on the distribution side. However, in the current scenario, such devices are often limited to households/terminal nodes as their primary purpose for installation is services like price controllable demand and load monitoring. A majority of the intermediate lines and buses that connect the distribution substation to the terminal nodes do not have real-time measurement devices and are thus unobserved in terms of their structure and state. This hiders real-time topology and state estimation.

This paper is aimed at developing a learning framework that is able to overcome the lack of measurements at the intermediate nodes. Specifically, the primary goal of our work is to propose an algorithm to learn the operating radial topology using only real-time voltage magnitude measurements from the terminal nodes. The reliance only on measurements from terminal nodes is crucial as it makes our work applicable to realistic deployment of smart devices in today's distribution grids. Further, our learning algorithm is able to tolerate a much higher fraction of missing nodes (with unavailable data) compared to prior work in this area. Our approach is based on provable relations in \emph{voltage magnitudes of triplets and pairs of terminal nodes} that are used to discover the operational edges iteratively. Computationally, the algorithm has polynomial complexity in the number of nodes in the system.
\squeezeup
\subsection{Prior Work}
Topology learning in radial distribution grids has received attention in recent times. Learning techniques in this area vary, primarily based on the operating conditions and measurements available for estimation. The authors of \cite{he2011dependency} use a Markov random field model for nodal phases to identify faults in the grids. \cite{distgrid_pscc} uses conditional independence tests to identify the grid topology in radial grids. \cite{bolognani2013identification} uses signs of elements in the inverse covariance matrix of nodal voltages to learn the operational topology. Signature/enevelope based identification of topology changes is proposed in \cite{berkeley}. Such comparison based schemes are used for parameter estimation in \cite{sandia1, sandia2}. In contrast with nodal voltage measurements, line flow measurements are used in a maximum likelihood based scheme for topology estimation in \cite{ramstanford}. In previous work \cite{distgridpart1,distgridpart2}, authors have analyzed topology learning schemes that rely on trends in second moments of nodal voltage magnitudes. Further, a spanning tree based topology learning algorithm is proposed in \cite{distgrid_ecc}. This line of work \cite{distgridpart1, distgridpart2,distgrid_ecc} is close in spirit to machine learning schemes \cite{ravikumar2010high,anandkumar2011high} developed to learn the structure of general probabilistic graphical models \cite{wainwright2008graphical}. A major limitation of the cited literature is that they assume measurement collection at most, if not all, nodes in the system. To the best of our knowledge, work that discuss learning in the presence of missing/unobserved nodes \cite{distgridpart2, distgrid_ecc} assume missing nodes to be separated by greater than two hops in the grid. As discussed earlier, distribution grids often have real-time meters only at terminal nodes and none at intermediate nodes that may be adjacent (one hop away).

\subsection{Contribution of This Work}
In this paper, we discuss topology learning in the radial distribution grid when measurements are limited to real-time voltage readings at the terminal nodes (end-users) alone. All intermediate nodes are unobserved and hence assumed to be missing nodes. We analyze voltages in the distribution grid using a linear lossless AC power flow model \cite{distgridpart1,distgridpart2} that is analogous to the popular \cite{89BWa,89BWb} LinDistFlow equations. For uncorrelated fluctuations of nodal power consumption, we construct functions of voltage magnitudes at a pair or triple of terminal nodes such that their values depend on the edges that connect the group of nodes. These functions provide the necessary tool to develop our learning scheme that iteratively identifies operational edges from the leaf onward to the substation root node. We discuss the computational complexity of the learning scheme and show that it is a third order polynomial in the number of nodes. In comparison to existing work, our approach is able to learn the topology and thereby estimate usage statistics in the presence of much greater fraction of missing/unobserved nodes. In fact, in limiting configurations, our learning algorithm is able to determine the true structure and statistics even when half of the nodes are unobserved/missing. We demonstrate the performance of our algorithm through experiments on test distribution grids.

The rest of the manuscript is organized as follows. Section \ref{sec:structure} introduces notations used in the paper and describes the grid topology and power flow models used for analysis in later sections. Section \ref{sec:trends} mentions assumptions made and describes important properties involving voltage measurements at terminal nodes. Our algorithm to learn the operating radial structure of the grid is discussed in Section \ref{sec:algo1} with detailed examples. We detail the computational complexity of the algorithm in Section \ref{sec:complexity}. Simulation results of our learning algorithm on test radial networks are presented in Section \ref{sec:experiments}. Finally, Section \ref{sec:conclusions} contains conclusions and discussion of future work.
\section{Distribution Grid: Structure and Power Flows}
\label{sec:structure}
\textbf{Radial Structure}: We denote the underlying graph of the distribution grid by the graph ${\cal G}=({\cal V},{\cal E})$, where ${\cal V}$ is the set of buses/nodes and ${\cal E}$ is the set of all undirected lines/edges. We term nodes by alphabets ($a$, $b$,...). An edge between nodes $a$ and $b$ is denoted by $(ab)$. An illustrative example is given in Fig.~\ref{fig:picHinv}. The operational grid consisting of one tree $\cal T$ with nodes ${\cal V}^{\cal T}$ and operational edge set ${\cal E}^{\cal T} \subset {\cal E}$ as shown in Fig.~\ref{fig:picHinv}. Our results are immediately extended to the case $K>1$. In tree $\cal T$, ${\cal P}_{a}$ denote the set of edges in the unique path from node $a$ to the root node (reference bus). A node $c$ is called a \emph{descendant} of node $a$ if the path from node $c$ to the root passes through $a$, i.e., ${\cal P}_{c} \subset {\cal P}_{a}$. We use ${\cal D}_{a}$ to denote the set of descendants of $a$ and include node $a$ in ${\cal D}_{a}$ by definition. If edge $(ac) \in {\cal E}^{\cal T}$ and $c$ is a descendant of $a$, we term $a$ as \emph{parent} and $c$ as its \emph{child node}. Further, as discussed in the Introduction, \emph{terminal nodes/leaf nodes} in the tree represent end-users or households that are assumed to be equipped with real-time nodal meters. The remaining intermediate nodes (not terminal nodes) are not observed and hence termed as \emph{missing nodes}.  These definitions are illustrated in Fig \ref{fig:picHinv}. Next we describe the notation used in the power flow equations.
\begin{figure}[!bt]
\centering
\includegraphics[width=0.29\textwidth,height=.25\textwidth]{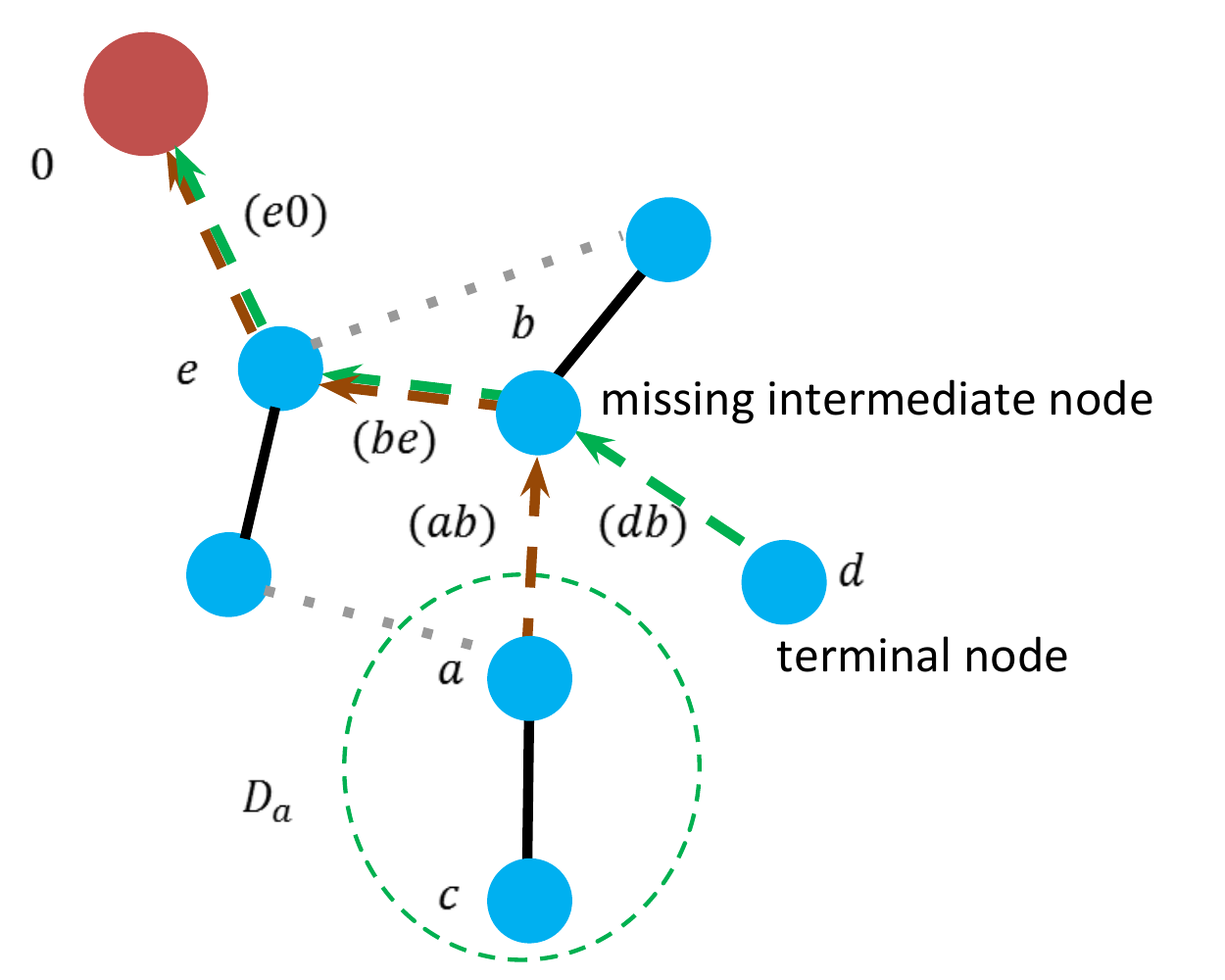}
\squeezeup
\caption{Radial distribution grid tree with substation/root node colored in red. Dotted grey lines represent open switches. Terminal leaf nodes ($d$) represent end-users from which real-time measurements are collected. The intermediate missing nodes ($b$) are unobserved. Here, nodes $a$ and $c$ are descendants of node $a$. Dashed lines represent the paths from nodes $a$ and $d$ to the root node.
\label{fig:picHinv}}
\end{figure}

\textbf{Power Flow Models}: We use $z_{ab}=r_{ab}+i x_{ab}$ to denote the complex impedances of line $(ab)$ ($i^2=-1$) where $r_{ab}$ and $x_{ab}$ represent the line resistance and reactance respectively. Let real valued scalars, $v_a$, $\theta_a$, $p_a$ and $q_a$ denote the voltage magnitude, voltage phase, active and reactive power injection respectively at node $a$. At each node $a$, Kirchhoff's law of power flow relate them as follows:
\squeezeup
\begin{align}
\squeezeup
\squeezeup
P_a =p_a+i q_a
= \underset{b:(ab)\in{\cal E}^{\cal T}}{\sum}\frac{v_a^2-v_a v_b\exp(i\theta_a-i\theta_b)}{z_{ab}^*}\label{P-complex1}
\squeezeup
\end{align}
Note that Eq.~(\ref{P-complex1}) is nonlinear and non-convex. Under realistic assumption that losses of both active and reactive power losses on each line of tree ${\cal T}$ is small, we can neglect second order terms in Eq.~(\ref{P-complex1}) to achieve the following linearized form \cite{distgridpart1,distgridpart2,bolognani2013identification}:
\squeezeup
\begin{align}
\squeezeup
p_a&=\underset{b:(ab)\in{\cal E}^{{\cal T}}}{\sum}\left(\beta_{ab}(\theta_a-\theta_b)+
g_{ab}(v_a-v_b)\right),\label{PF_LPV_p}\\
q_a&=\underset{b:(ab)\in{\cal E}^{{\cal T}}}{\sum}\left(-g_{ab}(\theta_a-\theta_b)+
\beta_{ab}(v_a-v_b)\right)\label{PF_LPV_q}\\
\text{where~} g_{ab}&\doteq\frac{r_{ab}}{x_{ab}^2+r_{ab}^2}, \beta_{ab}\doteq\frac{x_{ab}}{x_{ab}^2+r_{ab}^2} \label{LCPF_p}
\squeezeup
\end{align}
As shown in \cite{distgridpart1}, Eqs.~(\ref{PF_LPV_p}),(\ref{PF_LPV_q}) are equivalent to the LinDistFlow equations for power flow in distribution grid \cite{89BWa,89BWb,89BWc}, when deviations in voltage magnitude are assumed to be small. Similar to LinDistFlow model, Eqs.~(\ref{PF_LPV_p}),(\ref{PF_LPV_q}) are lossless with sum power equal to zero ($\sum_{a \in {\cal V}^{\cal T}}P_a = 0$). Further, note that both active and reactive power injections are functions of difference in voltage magnitudes and phases of neighboring nodes. Thus, the analysis of the system can be reduced by one node termed as reference node with voltage magnitude and phase at all other nodes being measured relative to this node. In our case, we take the substation/root node as the reference node with voltage magnitude $1$ and phase $0$ respectively. The reference node's injection also balances the power injections in the remaining network. Inverting Eqs.~(\ref{PF_LPV_p}),(\ref{PF_LPV_q}) for the reduced system (without the reference node), we express voltages as a function of nodal power injections in the following vector form:
\squeezeup
\begin{align}
v = H^{-1}_{1/r}p + H^{-1}_{1/x}q~~ \theta = H^{-1}_{1/x}p - H^{-1}_{1/r}q  \label{LC_PF}
\end{align}
We term this as the \textbf{Linear Coupled Power Flow (LC-PF) model} where $p$, $q$, $v$ and $\theta$ are the vectors of real power, reactive power injections, voltage magnitudes and phase angles respectively at the non-substation nodes. $H_{1/r}$ and $H_{1/x}$ are the reduced weighted Laplacian matrices for tree $\cal T$ where  reciprocal of resistances and reactances are used respectively as edge weights. The reduction is achieved by removing the row and column corresponding to the reference bus in the original weighted Laplacian matrix. We denote the mean of a random vector $X$ by $\mu_{X} = \mathbb{E}[X]$. For two random vectors $X$ and $Y$, the covariance matrix is denoted by $\Omega_{XY} = \mathbb{E}[(X-\mu_{X})(Y-\mu_{Y})^T]$. Using Eq.~(\ref{LC_PF}), we can relate the means and covariances of voltage magnitudes with those of active and reactive injection as follows:
\begin{align}
\mu_v &= H^{-1}_{1/r}\mu_p + H^{-1}_{1/x}\mu_q\label{means}\\
\Omega_{v} &= H^{-1}_{1/r}\Omega_{p}H^{-1}_{1/r} + H^{-1}_{1/x}\Omega_qH^{-1}_{1/x}+H^{-1}_{1/r}\Omega_{pq}H^{-1}_{1/x}\nonumber\\
&~+H^{-1}_{1/x}\Omega_{qp}H^{-1}_{1/r}\label{volcovar1}
\end{align}
Using these statistical quantities, we discuss useful identities that arise in radial distribution grids in the next section that form the basis of our learning algorithms.

\section{Properties of Voltage Magnitudes in Radial Grids}
\label{sec:trends}
First, we make the following assumption regarding statistics of power injections at the non-substation grid nodes, under which our results hold.

\textbf{Assumption $1$:} Fluctuations of active and reactive powers at different nodes are uncorrelated. Thus, $\forall a \neq b$ non-substation nodes, $\Omega_p(a,b) = \Omega_q(a,b)= \Omega_{qp}(a,b) = 0$.

As considered in prior literature \cite{bolognani2013identification,distgridpart2}, this assumption is reasonable over short time-intervals where fluctuations in nodal power usage at households/terminal nodes are independent and hence uncorrelated. Intermediate nodes that do not represent end-users are uncorrelated if they have independent usage patterns. Specifically, for intermediate nodes involved in separation of power into downstream lines and without any major nodal usage, the net power injection is contributed by leakage or device losses and hence uncorrelated from the rest. Note that Assumption $1$ does not specify the class of distributions that can model individual node's power injection. It is applicable when nodal injections are negative (loads), positive (due to local generation) or a mixture of both. In future work, we will relax this assumption and discuss learning in the presence of positively correlated end-user injection profiles.

Next, we mention an analytical statement relating the inverse of the reduced Laplacian matrix for a radial graph that we use in our later results.
\begin{lemma}\cite{68Resh,distgridpart1}\label{lemma1}
The reduced weighted Laplacian matrix $H_{1/r}$ for tree $\cal T$ satisfies
\begin{align}
 H_{1/r}^{-1}(a,b)&= \sum_{(cd) \in {\cal P}_a\bigcap {\cal P}_b} r_{cd} \label{Hrxinv}
\end{align}
\end{lemma}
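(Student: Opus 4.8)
The plan is to exhibit the claimed matrix $M$, with entries $M(a,b)=\sum_{(cd)\in\mathcal{P}_a\cap\mathcal{P}_b} r_{cd}$, as an explicit inverse of $H_{1/r}$ by factoring both matrices through the incidence structure of the tree. Since $\mathcal{T}$ is rooted at the reference bus, every non-substation node $c$ has a unique parent edge $e_c=(c,p(c))$ on its path to the root; this is a bijection between the non-root nodes and the edges of $\mathcal{T}$, which makes the reduced (root-deleted) objects square. First I would introduce the oriented reduced incidence matrix $A$ (rows indexed by edges, columns by non-root nodes, each edge oriented from child to parent, with the root column deleted) and the diagonal resistance matrix $R=\mathrm{diag}(r_e)$, and record the standard fact that the reduced weighted Laplacian factors as $H_{1/r}=A^\top R^{-1}A$, since the edge weights in $H_{1/r}$ are the conductances $1/r_{ab}$.

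Next I would introduce the path matrix $P$ defined by $P(e_c,a)=1$ iff $e_c\in\mathcal{P}_a$ (equivalently, iff $a$ is a descendant of $c$), and observe that the target factors as $M=P^\top R\,P$: its $(a,b)$ entry is $\sum_e r_e\,P(e,a)P(e,b)$, and $P(e,a)P(e,b)=1$ exactly when $e$ lies on both $\mathcal{P}_a$ and $\mathcal{P}_b$, which reproduces the sum over $\mathcal{P}_a\cap\mathcal{P}_b$. The crux of the argument is the identity $P^\top A=I$ on the non-root nodes, which I would prove from the telescoping reading of the incidence map: for any nodal potential vector $\phi$, the edge-drop vector $A\phi$ has entry $\phi_c-\phi_{p(c)}$ on edge $e_c$, and summing these drops along the unique root-path of a node $a$ recovers $\phi_a$ (the root potential being $0$ by reduction), i.e. $(P^\top A\phi)_a=\sum_{e_c\in\mathcal{P}_a}(\phi_c-\phi_{p(c)})=\phi_a$. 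Since this holds for every $\phi$ we obtain $P^\top A=I$, and as $A$ is square it follows that $AP^\top=I$ as well.

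With these pieces the computation collapses: $H_{1/r}M=A^\top R^{-1}A\,P^\top R\,P=A^\top R^{-1}(AP^\top)R\,P=A^\top R^{-1}R\,P=A^\top P=(P^\top A)^\top=I$, so $M=H_{1/r}^{-1}$, which is precisely Eq.~(\ref{Hrxinv}).

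I expect the main obstacle to be the careful bookkeeping of the root reduction: deleting the root row and column is what makes $A$ (and hence $H_{1/r}$) invertible on a tree, and it is also what turns the telescoping sum into a clean identity, since the drop across the edge incident to the root keeps only its child contribution $\phi_c$. An equivalent and perhaps more transparent route, which I would mention as a sanity check, is the electrical-network reading of the inverse: $H_{1/r}^{-1}(a,b)$ is the potential induced at $a$ when a unit current is injected at $b$ and extracted at the root; on a tree this current flows only along $\mathcal{P}_b$, so the voltage drops accumulated along $\mathcal{P}_a$ sum the resistances of precisely the shared edges $\mathcal{P}_a\cap\mathcal{P}_b$, again yielding Eq.~(\ref{Hrxinv}). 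A direct entrywise check of $H_{1/r}M=I$ via the neighbor sum $\sum_{c\sim a}\tfrac{1}{r_{ac}}\bigl(M(a,e)-M(c,e)\bigr)$ with a short case analysis on the location of $e$ relative to $a$ is also available, but the incidence factorization avoids that casework.
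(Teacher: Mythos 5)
Your proof is correct, but note that the paper itself offers no proof of this lemma at all: it is imported by citation (to \cite{68Resh,distgridpart1}), so there is no in-paper argument to compare against. Your incidence-factorization route is a clean, self-contained way to establish it. The key pieces all check out: the child-to-parent orientation makes the reduced incidence matrix $A$ square (the node--edge bijection $c \mapsto e_c$ is exactly why the root-deleted objects are invertible on a tree); the factorizations $H_{1/r} = A^\top R^{-1} A$ and $M = P^\top R\, P$ are both verified entrywise without difficulty; the telescoping identity $(P^\top A\phi)_a = \phi_a$ correctly uses that the root potential is zeroed out by the reduction; and the step from the left inverse $P^\top A = I$ to the two-sided inverse $AP^\top = I$ is legitimate precisely because $A$ is square. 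The final algebra $H_{1/r}M = A^\top R^{-1}(AP^\top)R\,P = A^\top P = (P^\top A)^\top = I$ then closes the argument. Your electrical-network reading (unit current injected at $b$, extracted at the root, flowing only along ${\cal P}_b$, so the potential at $a$ accumulates drops only on ${\cal P}_a \cap {\cal P}_b$) is the intuition behind the cited sources' treatment of this identity, and it is also the interpretation most useful for understanding how the paper deploys the lemma in Theorems \ref{theoremcase1} and \ref{theoremcase2}; your algebraic version has the advantage of requiring no appeal to physical reasoning or to uniqueness of the current flow.
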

In other words, the $(a,b)^{th}$ entry in $H^{-1}_{1/r}$ is equal to the sum of line resistances of edges common to paths from node $a$ and $b$ to the root. For example, in Fig.~\ref{fig:picHinv}, $H_{1/r}^{-1}(a,d) = r_{be}+ r_{e0}$. Using Eq.~(\ref{Hrxinv}) it follows immediately that if node $b$ is the parent of node $a$, then $\forall c$
\begin{align}
{\huge H}_{1/r}^{-1}(a,c)-{\huge H}_{1/r}^{-1}(b,c) &&=\begin{cases}r_{ab} & \quad\text{if node $c \in {\cal D}_a$}\\
0 & \quad\text{otherwise,} \end{cases} \label{Hdiff}
\end{align}
Next, consider the function $\phi$ defined over two nodes $a$ and $b$ as $\phi_{ab} = \mathbb{E}[(v_a - \mu_{v_a})-(v_b-\mu_{v_b})]^2$. $\phi_{ab}$ represents \textit{the variance of the difference in voltage magnitudes at nodes $a$ and $b$}. Using Eq.~(\ref{volcovar1}) we can write $\phi_{ab}$ in terms of power injection statistics in tree $\cal T$ as follows
\begin{small}
\begin{align}
&\phi_{ab} = \Omega_{v}(a,a) - 2\Omega_{v}(a,b) + \Omega_{v}(b,b) \label{expand}\\
&= \smashoperator[lr]{\sum_{d \in {\cal T}}}(H^{-1}_{1/r}(a,d)- H^{-1}_{1/r}(b,d))^2\Omega_p(d,d)\nonumber\\
&+(H^{-1}_{1/x}(a,d)- H^{-1}_{1/x}(b,d))^2 \Omega_q(d,d)\nonumber\\
&+2\left(H^{-1}_{1/r}(a,d)- H^{-1}_{1/r}(b,d)\right)\left(H^{-1}_{1/x}(a,d)- H^{-1}_{1/x}(b,d)\right)\Omega_{pq}(d,d)\label{usediff_1}
\end{align}
\end{small}
Note that Lemma \ref{lemma1} and Eq.~(\ref{Hdiff}) can be inserted in Eq.~(\ref{usediff_1}) to simply it. In fact, doing so lets us derive properties of $\phi$ for terminal nodes in tree $\cal T$ as discussed next.

\begin{theorem} \label{theoremcase1}
Let node $b$ be the parent of nodes $a$ and $c$ in $\cal T$ such that $(ab)$ and $(bc)$ are operational edges (see Fig.~\ref{fig:item2}). Then
\begin{align}
\phi_{ac} &= \smashoperator[lr]{\sum_{d \in {\cal D}_a}}r_{ab}^2\Omega_p(d,d)+x_{ab}^2 \Omega_q(d,d)+2r_{ab}x_{ab}\Omega_{pq}(d,d)\nonumber\\
&+ \smashoperator[lr]{\sum_{d \in {\cal D}_c}}r_{bc}^2\Omega_p(d,d)+x_{bc}^2 \Omega_q(d,d)+2r_{bc}x_{bc}\Omega_{pq}(d,d)\label{equal1}\\
&= \phi_{ab} + \phi_{bc}  \label{equal2}
\end{align}
\end{theorem}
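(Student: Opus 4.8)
The plan is to start from the general expression for $\phi$ in Eq.~(\ref{usediff_1}), specialized to the sibling pair $a$ and $c$, and then reduce every coefficient using the difference identity of Eq.~(\ref{Hdiff}). Writing out Eq.~(\ref{usediff_1}) for $\phi_{ac}$, the sum ranges over all $d \in {\cal T}$, and each summand is controlled by the two scalar differences $H^{-1}_{1/r}(a,d) - H^{-1}_{1/r}(c,d)$ and its $1/x$ counterpart. The entire argument therefore reduces to evaluating these differences, which I would do by splitting them through the common parent $b$.

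The key step is to exploit that $b$ is the parent of both $a$ and $c$, so I can apply Eq.~(\ref{Hdiff}) twice, once for the pair $(a,b)$ and once for $(c,b)$, and subtract:
\begin{align}
H^{-1}_{1/r}(a,d) - H^{-1}_{1/r}(c,d) &= \left(H^{-1}_{1/r}(a,d) - H^{-1}_{1/r}(b,d)\right) \nonumber\\
&\quad - \left(H^{-1}_{1/r}(c,d) - H^{-1}_{1/r}(b,d)\right). \nonumber
\end{align}
The decisive structural fact here is that $a$ and $c$ are siblings, so their descendant sets are disjoint, ${\cal D}_a \cap {\cal D}_c = \emptyset$. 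Consequently the two indicator conditions in Eq.~(\ref{Hdiff}) never fire simultaneously, and the difference above equals $r_{ab}$ when $d \in {\cal D}_a$, equals $-r_{bc}$ when $d \in {\cal D}_c$, and vanishes otherwise. An identical argument applied to $H^{-1}_{1/x}$ (Lemma~\ref{lemma1} holds verbatim with $x$ in place of $r$) gives $x_{ab}$, $-x_{bc}$, and $0$ on the same three sets of nodes.

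Substituting these three-case values back into Eq.~(\ref{usediff_1}) then yields Eq.~(\ref{equal1}) directly: squaring removes the sign on the ${\cal D}_c$ branch, the $\Omega_p$ and $\Omega_q$ coefficients become $r_{ab}^2, x_{ab}^2$ on ${\cal D}_a$ and $r_{bc}^2, x_{bc}^2$ on ${\cal D}_c$, and the cross products supply the $2 r_{ab} x_{ab}$ and $2 r_{bc} x_{bc}$ terms, with every node outside ${\cal D}_a \cup {\cal D}_c$ contributing zero. Finally, for Eq.~(\ref{equal2}) I would simply evaluate $\phi_{ab}$ and $\phi_{bc}$ separately by the same route: applying Eq.~(\ref{Hdiff}) to the single parent--child pair $(a,b)$ collapses the sum in Eq.~(\ref{usediff_1}) to exactly the first line of Eq.~(\ref{equal1}), and the pair $(b,c)$ reproduces the second line, whence $\phi_{ac} = \phi_{ab} + \phi_{bc}$ by inspection.

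I do not anticipate a genuinely hard step; the whole argument is a bookkeeping exercise in which the only point requiring care is tracking the membership of each summation index $d$ in ${\cal D}_a$ versus ${\cal D}_c$. The disjointness of these two descendant sets is what makes the case analysis clean, and it is the one fact I would want to state explicitly before carrying out the substitution.
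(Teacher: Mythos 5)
Your proposal is correct and follows essentially the same route as the paper's proof: both evaluate the differences $H^{-1}_{1/r}(a,d)-H^{-1}_{1/r}(c,d)$ case by case (equal to $r_{ab}$ on ${\cal D}_a$, $-r_{bc}$ on ${\cal D}_c$, zero elsewhere, using disjointness of the descendant sets), substitute into Eq.~(\ref{usediff_1}) to get Eq.~(\ref{equal1}), and obtain Eq.~(\ref{equal2}) by computing $\phi_{ab}$ and $\phi_{bc}$ the same way. Your explicit telescoping through the parent $b$ via Eq.~(\ref{Hdiff}) is just a slightly more spelled-out version of the paper's direct appeal to Lemma~\ref{lemma1}.
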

\begin{proof}
Observe Lemma \ref{lemma1}. As $(ab)$ and $(bc)$ are operational edges, the only nodes $d$ such that $(H^{-1}_{1/r}(a,d)- H^{-1}_{1/r}(c,d)) \neq 0$ are either descendants of $a$ (set ${\cal D}_a$) or of $c$ (set ${\cal D}_c$). Further ${\cal D}_c$ and ${\cal D}_a$ are disjoint. When $d \in {\cal D}_a$, $(H^{-1}_{1/r}(a,d)- H^{-1}_{1/r}(c,d)) = r_{ab}$, while when $d \in {\cal D}_c$, $(H^{-1}_{1/r}(a,d)- H^{-1}_{1/r}(c,d)) = -r_{bc}$. Using this in the formula for $\phi_{ac}$ in Eq.~(\ref{usediff_1}) gives us the relation. The equality $\phi_{ac} = \phi_{ab} + \phi_{bc}$ is verified by plugging values in Eq.~(\ref{usediff_1}) for $\phi_{ab}$, $\phi_{bc}$ and $\phi_{ac}$.
\end{proof}
\begin{figure}[!bt]
\centering
\hspace*{\fill}
\subfigure[]{\includegraphics[width=0.16\textwidth]{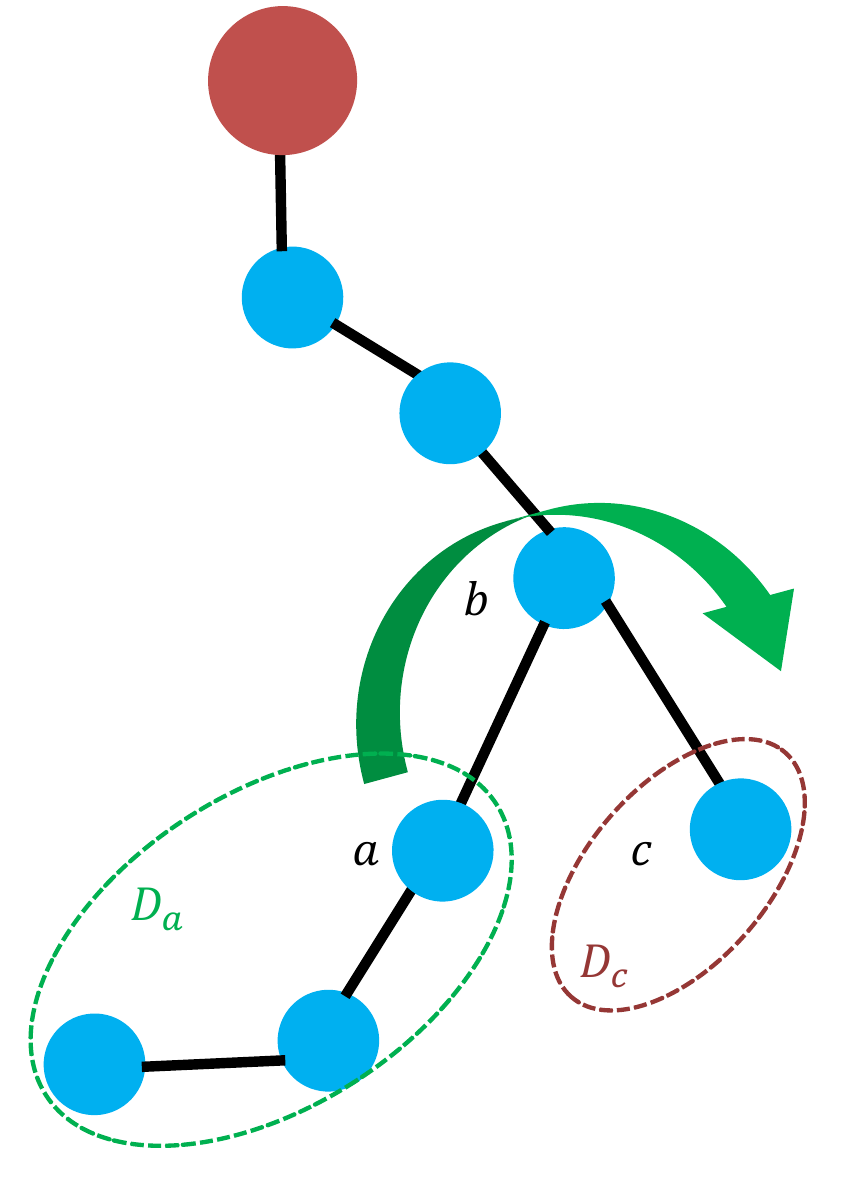}\label{fig:item2}}\hfill
\subfigure[]{\includegraphics[width=0.16\textwidth]{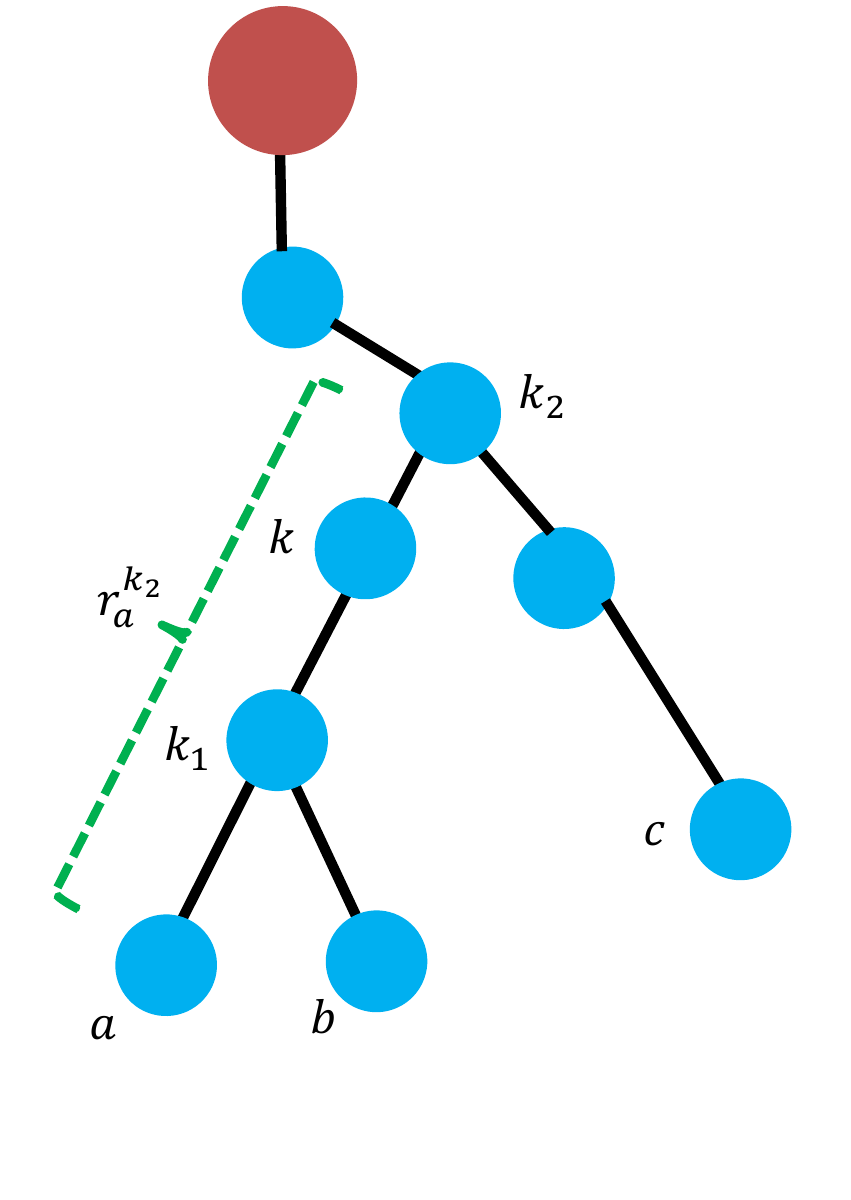}\label{fig:item3}}
\hspace*{\fill}
\squeezeup
\caption{(a) Distribution grid tree with nodes $a$ and $c$ as children of common parent node $b$. (b)Terminal nodes $a$ and $b$ have common parent $k_1$. $k_1$ and terminal node $c$ are descendants of $k_2$. $r^{k_2}_a$ is the sum of resistance on lines $(ak_1), (k_1k)$ and $(kk_2)$ connecting nodes $a$ and $k$.
\label{fig:items}}
\end{figure}
A few points are in order. First, note that the only operational lines whose impedances appear on the right side of Eq.~(\ref{equal1}) are $(ab)$ and $(bc)$. For the special case where $a$ and $c$ are terminal nodes with parent $b$, the relation reduces to the following:
\begin{align}
\phi_{ac} &= r_{ab}^2\Omega_p(a,a)+x_{ab}^2 \Omega_q(a,a)+2r_{ab}x_{ab}\Omega_{pq}(a,a)\nonumber\\
&+ r_{bc}^2\Omega_p(c,c)+x_{bc}^2 \Omega_q(c,c)+2r_{bc}x_{bc}\Omega_{pq}(c,c) \label{equal3}
\end{align}
If the covariance of injections at $a$ and $b$ are known, the above condition can be checked for each potential parent node `$b$' in linear time to identify the true parent. Second, the equality in Eq.~(\ref{equal2}) is true only when ${\cal P}_{a} \cap {\cal P}_{c} = {\cal P}_{b}$. It is replaced by a strict inequality for other configurations that we omit discussing as they are outside the scope of our learning conditions. The next theorem gives a result that relates the voltages at three terminal nodes.
\begin{theorem} \label{theoremcase2}
Let terminal nodes $a$ and $b$ have common parent node $k_1$. Let $c$ be another terminal node such that $c, k_1 \in {\cal D}_{k_2}$ and ${\cal P}_{k_1} \cap {\cal P}_{c} = {\cal P}_{k_2}$ for some intermediate node $k_2$ (see Fig.~\ref{fig:item3}). Let $r^{k_2}_a$ and $x^{k_2}_a$ denote the sum of resistance and reactance respectively on lines on the path from node $a$ to node $k_2$, i.e., $r^{k_2}_a =\smashoperator[lr]{\sum_{(ef) \in {\cal P}_{a} - {\cal P}_{k_2}}}r_{ef}$, $x^{k_2}_a =\smashoperator[lr]{\sum_{(ef) \in {\cal P}_{a} - {\cal P}_{k_2}}}x_{ef}$. Define $r^{k_2}_b$, $r^{k_2}_{k_1}$ etc. in the same way. Then
\begin{small}
\begin{align}
\phi_{ac}-\phi_{bc} &= \Omega_p(a,a)((r^{k_2}_a)^2 -(r^{k_2}_{k_1})^2) + \Omega_q(a,a)((x^{k_2}_a)^2 -(x^{k_2}_{k_1})^2) \nonumber\\
&+\Omega_{pq}(a,a)(r^{k_2}_ax^{k_2}_a -r^{k_2}_{k_1}x^{k_2}_{k_1})-\Omega_p(b,b)((r^{k_2}_b)^2 -(r^{k_2}_{k_1})^2)\nonumber\\
&+ \Omega_q(b,b)((x^{k_2}_b)^2 -(x^{k_2}_{k_1})^2) + \Omega_{pq}(b,b)(r^{k_2}_bx^{k_2}_b -r^{k_2}_{k_1}x^{k_2}_{k_1})\label{equaltriple}
\end{align}
\end{small}
\end{theorem}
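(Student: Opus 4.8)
The plan is to expand $\phi_{ac}$ and $\phi_{bc}$ through Eq.~(\ref{usediff_1}) and read off the result from the cancellation that occurs in their difference. For a third node $d$, set $\delta_r(d)\doteq H^{-1}_{1/r}(a,d)-H^{-1}_{1/r}(c,d)$ and $\epsilon_r(d)\doteq H^{-1}_{1/r}(b,d)-H^{-1}_{1/r}(c,d)$, and define $\delta_x(d),\epsilon_x(d)$ identically with $H^{-1}_{1/x}$. By Eq.~(\ref{usediff_1}), $\phi_{ac}=\sum_{d\in{\cal T}}[(\delta_r(d))^2\Omega_p(d,d)+(\delta_x(d))^2\Omega_q(d,d)+2\delta_r(d)\delta_x(d)\Omega_{pq}(d,d)]$, and $\phi_{bc}$ is the same sum with every $\delta$ replaced by the corresponding $\epsilon$.

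The central step is to show that every term with $d\notin\{a,b\}$ drops out of $\phi_{ac}-\phi_{bc}$. Since $a$ and $b$ are leaves sharing the common parent $k_1$, we have ${\cal P}_a=\{(ak_1)\}\cup{\cal P}_{k_1}$ and ${\cal P}_b=\{(bk_1)\}\cup{\cal P}_{k_1}$, so the two paths differ only in their first edge. By Lemma~\ref{lemma1}, $H^{-1}_{1/r}(a,d)-H^{-1}_{1/r}(b,d)$ picks up the resistances of $(ak_1)$ and $(bk_1)$ only when these edges lie on ${\cal P}_d$; because $a$ and $b$ are leaves, ${\cal D}_a=\{a\}$ and ${\cal D}_b=\{b\}$, so this occurs exactly for $d=a$ and $d=b$ respectively. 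Hence $\delta_r(d)=\epsilon_r(d)$, and likewise $\delta_x(d)=\epsilon_x(d)$, for all $d\neq a,b$; the corresponding quadratic forms are then identical and cancel in the difference, irrespective of the injection variances carried by the intermediate nodes.

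It remains to evaluate the surviving blocks at $d=a$ and $d=b$ using Lemma~\ref{lemma1} together with the hypothesis ${\cal P}_{k_1}\cap{\cal P}_c={\cal P}_{k_2}$. For example $H^{-1}_{1/r}(a,a)=\sum_{(ef)\in{\cal P}_a}r_{ef}$ and $H^{-1}_{1/r}(c,a)=\sum_{(ef)\in{\cal P}_{k_2}}r_{ef}$ give $\delta_r(a)=r^{k_2}_a$; the same manipulations yield $\epsilon_r(a)=r^{k_2}_{k_1}$, $\delta_r(b)=r^{k_2}_{k_1}$ and $\epsilon_r(b)=r^{k_2}_b$, with the reactance versions obtained by replacing $r$ with $x$. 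Substituting these into $(\delta_r(d))^2-(\epsilon_r(d))^2$, $(\delta_x(d))^2-(\epsilon_x(d))^2$ and the cross terms, and then grouping by $\Omega_p$, $\Omega_q$ and $\Omega_{pq}$, reproduces Eq.~(\ref{equaltriple}). The $d=b$ contribution enters with the opposite sign, since $(\delta_r(b))^2-(\epsilon_r(b))^2=(r^{k_2}_{k_1})^2-(r^{k_2}_b)^2$, consistent with the leading $-\Omega_p(b,b)$ term.

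I expect the only genuine obstacle to be the first step: one must argue cleanly that ${\cal P}_a\cap{\cal P}_d={\cal P}_b\cap{\cal P}_d$ for every $d\neq a,b$, that is, that $a$ and $b$ are indistinguishable from the viewpoint of any other node, which is precisely where the leaf and common-parent hypotheses are indispensable. Once this telescoping is established, evaluating the four residual differences is a routine application of Lemma~\ref{lemma1}.
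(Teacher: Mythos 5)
Your proposal is correct and follows essentially the same route as the paper's own proof: the paper likewise uses Lemma~\ref{lemma1} to show that every term with $d\neq a,b$ cancels in $\phi_{ac}-\phi_{bc}$ (its Eq.~(\ref{use1})), evaluates the four surviving entries as $r^{k_2}_a$, $r^{k_2}_b$ and $r^{k_2}_{k_1}$ (its Eqs.~(\ref{use3}),(\ref{use4})), and substitutes into the expansion Eq.~(\ref{usediff_1}). Your sign bookkeeping on the $d=b$ block is the same computation made explicit (and it correctly shows that the entire $b$ block enters with a minus sign).
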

\begin{proof}
As $a$ and $b$ are terminal nodes with same parent $k_1$, for each node $d \neq a \neq b$, the paths from $d$, $a$ and $b$ to the root follow  ${\cal P}_{a} \cap {\cal P}_{d} = {\cal P}_{b} \cap {\cal P}_{d}$. Using Lemma \ref{lemma1}, we thus have $\forall d \neq a,b $, $H^{-1}_{1/r}(a,d) = H^{-1}_{1/r}(b,d)$ and 
\begin{small}
\begin{align}
&H^{-1}_{1/r}(a,d) - H^{-1}_{1/r}(c,d) = H^{-1}_{1/r}(b,d) - H^{-1}_{1/r}(c,d) \label{use1}\\
&H^{-1}_{1/r}(a,a) - H^{-1}_{1/r}(a,b) = r_{ak_1}, H^{-1}_{1/r}(b,b) - H^{-1}_{1/r}(b,a) = r_{bk_1}\label{use2}
\end{align}
\end{small}
As $k_1$ and $c$ are descendants of node $k_2$ and ${\cal P}_{k_1} \cap {\cal P}_{c} = {\cal P}_{k_2}$,
\begin{small}
\begin{align}
H^{-1}_{1/r}(a,a) - H^{-1}_{1/r}(c,a) = r^{k_2}_a, H^{-1}_{1/r}(b,b) - H^{-1}_{1/r}(c,b) = r^{k_2}_b \label{use3}
\end{align}
\end{small}
Further, using Eqs.~(\ref{use2}, \ref{use3}), we get
\begin{small}
\begin{align}
H^{-1}_{1/r}(b,a) - H^{-1}_{1/r}(c,a)= H^{-1}_{1/r}(a,b) - H^{-1}_{1/r}(c,b)= r^{k_2}_{k_1} \label{use4}
\end{align}
\end{small}
where $r^{k_2}_a, r^{k_2}_{k_1}$ etc. are defined in the statement of the theorem. Similar relations can be written for $H^{-1}_{1/x}$ terms as well. We now expand $\phi_{ac}$ and $\phi_{bc}$ using Eq.~(\ref{usediff_1}). Using Eq.~(\ref{use1},\ref{use3},\ref{use4}) in the expression for $\phi_{ac}-\phi_{bc}$ gives Eq.~(\ref{equaltriple}).
\end{proof}
Observe that the lines whose impedances appear on the right side of  Eq.~(\ref{equaltriple}) are $(ak_1), (bk_1)$ and the ones on the path from node $k_1$ to $k_2$. If this path is known till the penultimate node $k$ before $k_2$ (see Fig.~\ref{fig:item3}), then Eq.~(\ref{equaltriple}) can be used to learn edge $(kk_2)$ through a linear search among candidate nodes for $k_2$. In the next section, we discuss the use of this relation to learn the path from terminal pairs with common parent (here $a$ and $b$) to the root iteratively. Further, note that the value of $\phi_{ac} -\phi_{bc}$ is independent of injections at intermediate nodes and at terminal node $c$ as long as $c$ is a descendant of $k_2$. Thus, Eq.~(\ref{equaltriple}) only helps identify $c$'s relative position in the graph with respect to $k_2$. As shown in the next section, we are able to locate $c$'s parent using a post-order node traversal \cite{Cormen2001} in the grid graph.

\section{Algorithm to learn topology using terminal node measurements}
\label{sec:algo1}
Using the relations described in the previous section, we discuss our algorithm to learn the operational topology of the radial distribution grid. As mentioned in the Introduction, voltage measurements are available only at the terminal/leaf nodes. The observer also has access to power injection statistics (variances) at the terminal nodes. These statistics are either known from historical data or computed empirically from power injection measurements collected at the terminal nodes.  All intermediate nodes are missing and their voltage measurements and injection statistics are not observed/available. Further, we assume that impedances of all lines (open or operational) in $\cal E$ for the underlying loopy graph $\cal G$ are available as input. The task of the learning algorithm is to identify the set ${\cal E}^{\cal T}$ of operational edges in the radial grid $\cal T$. The substation root node in tree $\cal T$ is assumed to be connected to a single intermediate node. Otherwise each sub-tree connected to the substation node can be considered as a disjoint tree. First, we make the following restriction regarding the degree of missing intermediate nodes.

\textbf{Assumption 2:} All missing intermediate nodes are assumed to have a degree greater than $2$.

This assumption is necessary as without it, the solution to the topology learning problem will not be unique. As an example, consider the test distribution grid given in Fig.~\ref{fig:testline} where the path from leaf node $a$ to node $d$ passes through two intermediate nodes $b$ and $c$ of degree $2$ each. Both configuration $A$ (operations edges $(ab),(bc),(cd)$) and configuration $B$ (operations edges $(ac),(cb),(bd)$) are feasible operational topologies if voltage and power injection measurements are available only at the terminal node $a$. In other words, different values of injection covariances at nodes $b$ and $c$ can be taken such that relations between voltage and injections are satisfied in either configuration. Similar assumptions for uniqueness in learning general graphical models are mentioned in \cite{pearl}. For radial configurations that respect Assumption $2$, Algorithm $1$ learns the topology using measurements of voltage magnitudes and information of injection statistics at terminal nodes.
\begin{figure}[!bt]
\centering
\includegraphics[width=0.28\textwidth]{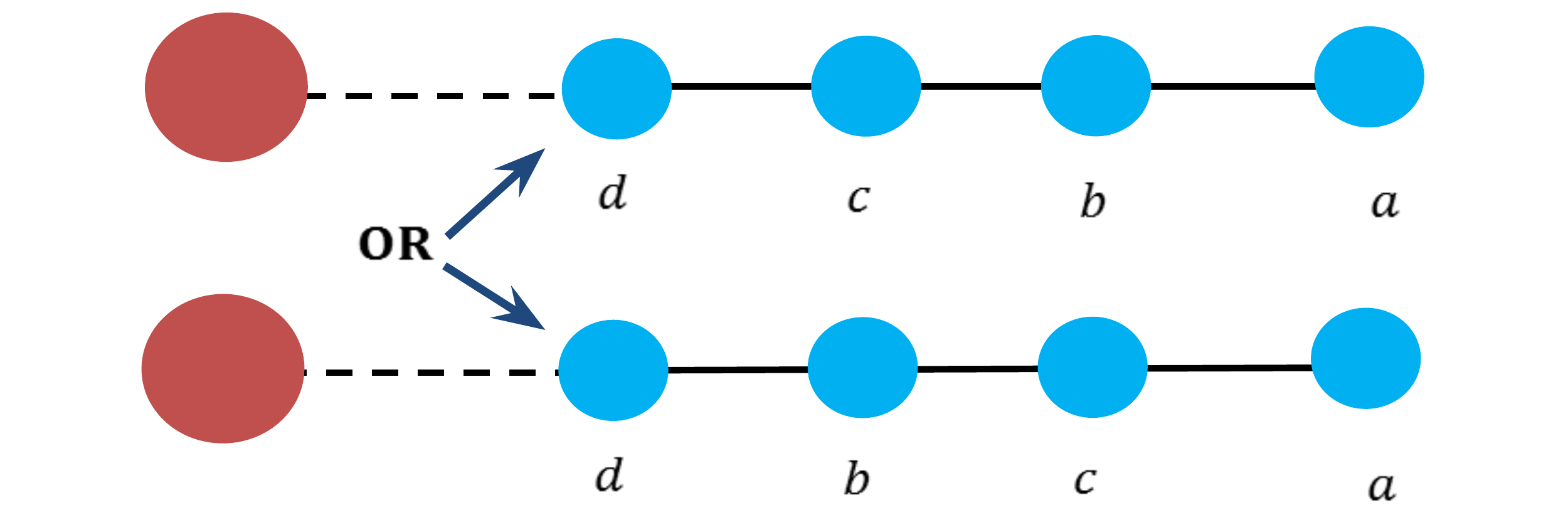}
\squeezeup
\caption{Distribution grid tree where terminal node $a$ is connected to node $d$ via two unknown intermediate nodes of degree $2$. Either configuration $d,c,b,a$ or $d,b,c,a$ for the intermediate nodes is a feasible structure given no measurements at nodes $b$ and $c$.
\label{fig:testline}}
\end{figure}
\begin{algorithm}
\caption{Topology Learning using Terminal Node Data}
\textbf{Input:} Injection covariances $\Omega_p, \Omega_q, \Omega_{pq}$ at terminal nodes $\cal L$, Missing node set ${\cal M} = {\cal V}^{\cal T} - {\cal L}$, $m$ voltage magnitude observations $v$ for nodes in ${\cal L}$, set of all edges $\cal E$ with line impedances.\\
\textbf{Output:} Operational Edge set ${\cal E}^{\cal T}$.
\begin{algorithmic}[1]
\begin{small}
\State $\forall$ nodes $a,c \in \cal L$ , compute $\phi_{ac} = \mathbb{E}[(v_a-\mu_{v_a}) -(v_c-\mu_{v_c})]^2$
\State $\forall a \in {\cal V}^{\cal T}$, define $par_{a} \gets \Phi$, $des_{a} \gets \Phi$
\ForAll{$a \in {\cal L}$} \label{stepa}
\If {$par_a = \Phi \& \exists c \in {\cal L}, b \in {\cal M}$ s.t. $\phi_{ab},c$ satisfy Eq.~(\ref{equal3})}
\State ${\cal E}^{\cal T} \gets {\cal E}^{\cal T} \cup \{(ab), (bc)\}$
\State $par_a \gets b, par_c \gets b, des_b \gets {a,c} $ \label{stepaend}
\EndIf
\EndFor
\State ${\cal L} \gets \{a: a \in {\cal L}, par_a = \Phi\}$, $tp \gets 1,{\cal M}_1 \gets \Phi$
\While{$tp >0$} \label{stepb}
\State ${\cal M}_2 \gets \{k_1: k_1 \in {\cal M}-{\cal M}_1, par_{k_1} = \Phi , des_{k_1} \neq \Phi\}$
\State ${\cal M}_1 \gets \{k_1: k_1 \in {\cal M}, par_{k_1} = \Phi , des_{k_1} \neq \Phi\}$
\ForAll{$k \in {\cal M}_1$ with $a,b \in des_{k}$}
\State $k_1\gets par_a$
\If{$\exists k_2 \in {\cal M}_1$ with ${\cal M}_2\cap \{k,k_2\} \neq \Phi$ with $c \in des_{k_2}$, s.t. $\phi_{ac}-\phi_{bc}$ satisfy Eq.~(\ref{equaltriple})}\label{stepb1}
\State ${\cal E}^{\cal T} \gets {\cal E}^{\cal T} \cup \{(kk_2)\}, par_k \gets k_2$
\Else
\If{$k \in {\cal M}_1, \exists k_2 \in {\cal M} -{\cal M}_1$, $c \in {\cal L}$, s.t. $\phi_{ac}-\phi_{bc}$ satisfy Eq.~(\ref{equaltriple})} \label{stepb2}
\State ${\cal E}^{\cal T} \gets {\cal E}^{\cal T} \cup \{(kk_2)\}, par_k \gets k_2, des_{k_2} \gets des_k$
\EndIf
\EndIf
\EndFor
\State $tp \gets |\{k_1: k_1 \in {\cal M}_1, par_{k_1} \neq \Phi\}|$
\EndWhile \label{stepbend}
\If $|{\cal M}_1| = 1$
\State Join $k \in {\cal M}_1$ to root
\EndIf
\State Form a post-order traversal node set ${\cal W}$ using $par_a$ for ${a:\forall a \in {\cal M}, des_a \neq \Phi}$ \label{postorder}
\ForAll{$c \in {\cal L}$} \label{stepc}
\For{$j \gets 1$ to $|{\cal W}|$}
\State $k_2 \gets {\cal W}(j)$ with $a,b \in des_{k_2}, k_1 \gets par_a$
\If{$\phi_{ac}-\phi_{bc}$ satisfy Eq.~(\ref{equaltriple})} \label{checkstepc}
\State ${\cal E}^{\cal T} \gets {\cal E}^{\cal T} \cup \{(ck_2)\}, {\cal W} \gets {\cal W}-\{k_2\}, j \gets |{\cal W}|$
\EndIf
\EndFor
\EndFor \label{stepcend}
\end{small}
\end{algorithmic}
\end{algorithm}

\textbf{Working}: Algorithm $1$ learns the structure of operational tree $\cal T$ iteratively from leaf node pairs up to the root. Set $\cal L$ represents the current set of terminal nodes/leaves with unknown parents, while set $\cal M$ represents the set of all missing intermediate nodes. Note that for each node $a$, $par_a$ denotes its identified parent while $des_{a}$ denotes a set of two leaf nodes that are its descendants. $\Phi$ represents the empty set. The two sets ($par$ and $des$) are used to keep track of the current stage of the learnt graph.  There are three major stages in the edge learning process in Algorithm $1$.

\textbf{First}, we build edges between leaf nodes pairs $a,c \in {\cal L}$ to their common missing parent node $b$ in Steps \ref{stepa} to \ref{stepaend}. Here the relation for $\phi_{ab}$ derived in Theorem \ref{theoremcase1} and Eq.~(\ref{equal3}) is used to identify the true missing parent in set $\cal M$. Note that only parents with two or more leaf nodes as children can be identified at this stage. Parents that have at most one leaf node as child (other children being missing intermediate nodes) are not identified by this check.

\textbf{Second}, we remove nodes with discovered parents from the set of leaf nodes $\cal L$ then iteratively learn the edges between intermediate nodes in Steps \ref{stepb} to \ref{stepbend}. At each stage of the iteration, ${\cal M}_2$ denotes the set of intermediate nodes with unknown parents whose descendants ($des$) were discovered in the previous iteration, while ${\cal M}_1$ denotes the set of all intermediate nodes with unknown parent ($par$) and known descendants. Clearly ${\cal M}_2 \subset {\cal M}_1$. The parent $k_2$ of each node $k$ in ${\cal M}_2$ is identified by using relation Eq.~(\ref{equaltriple}) in Theorem \ref{theoremcase2} for its descendants $a$ and $b$. There are two cases considered for candidate parent $k_2$: (A) $k_2$ belongs to ${\cal M}_1$ and has known descendant $c$ (Step \ref{stepb1}), and (B) $k_2$ belongs to ${\cal M} -{\cal M}_1$ and has no discovered descendant (Step \ref{stepb2}). For the second case, the algorithm tries to find some leaf node $c$ that is potentially an undiscovered descendant of $k_2$ by checking for the relation $\phi_{ac}-\phi_{bc}$  specified in Eq.~(\ref{equaltriple}). If the relation holds, node $k_2$ is denoted as the parent of $k_1$. As it is not clear if leaf $c$ is an immediate descendant (child) of $k_2$, no edge to $c$ is added. The iterations culminate when no additional edge between intermediate nodes is discovered.

\textbf{Third}, in Steps \ref{stepc} to \ref{stepcend}, the parents of the remaining leaf nodes in $\cal L$ are identified. Note that Eq.~(\ref{equaltriple}) also holds for non-immediate descendant $c$ of $k_2$, hence it cannot be used directly to identify $c$'s location. To overcome this, for each $c$ in $\cal L$, we first check at descendants of $k_2$ before $k_2$. This is ensured by creating a post-order traversal list $\cal W$ \cite{Cormen2001} in Step \ref{postorder}. In post-order traversal $\cal W$, each potential parent is reached only after all its descendants have been reached. The node $k_2$ which satisfies Eq.~(\ref{equaltriple}) in Step \ref{checkstepc} now gives the true parent of leaf $c$. The algorithm terminates after searching for all leaves in $\cal L$.

\textbf{Note:} Empirically computed second moments of voltages may differ from their true values and hence may not satisfy relations from Theorems \ref{theoremcase1} and \ref{theoremcase2} exactly. Hence we use tolerances $\tau_1$ and $\tau_2$ to check the correctness of Eq.~(\ref{equal3}) and Eq.~(\ref{equaltriple}) respectively in Algorithm $1$. We defer the selection of optimal thresholds to future work.

We discuss the operation of Algorithm $1$ through a test example given in Fig.~\ref{fig:example}. As described in the previous paragraph, in the first step, the common parents of leaf node pairs are discovered. In the next step, edges between intermediate nodes are discovered iteratively. Finally, positions of leaf nodes that do not have common parent with any other leaf node are located by a post-order traversal and then the substation connects to the node with no parent.
\begin{figure}[!bt]
\centering
\includegraphics[width=0.4\textwidth,height=.16\textwidth]{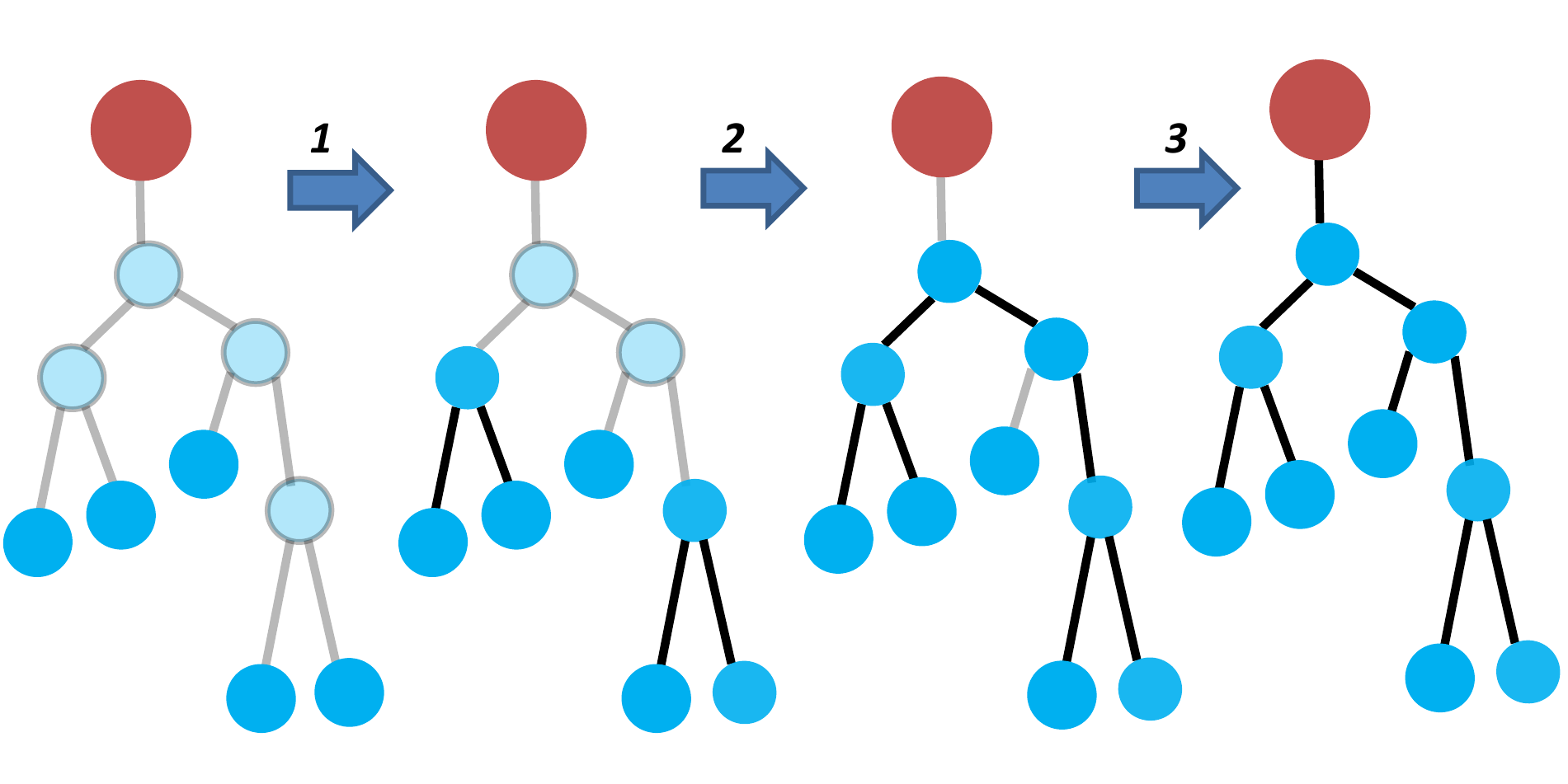}
\squeezeup
\caption{Sequence in which edges area learnt by Algorithm $1$ for an example radial grid using only measurements at leaf nodes. In the first step, leaf pairs with common parent are learnt, then intermediate nodes appear and finally connections to leaf nodes with no leaf sibling are discovered.
\label{fig:example}}
\end{figure}

\section{Computational Complexity and Performance}
\label{sec:complexity}
Consider the original loopy graph $\cal G$ to be complete (all node pairs belong to $\cal E$). We compute the computational complexity of operational tree $\cal $ in terms of the number of nodes $N = |{\cal V}^{\cal T}|$.

\textbf{Algorithm Complexity:} To measure the computational complexity, we focus on the iteration steps. Let the number of leaves in tree $\cal T$ be $l_0$. Detecting edges between leaf pairs and their common parent (Steps \ref{stepa} to \ref{stepaend}) takes $O(l_0^2(N-l_0))$ comparisons, which takes the form $O(N^3)$  in the worst case. Next we analyze the complexity of identifying edges between intermediate nodes. Let the number of `new' intermediate nodes with unknown parents (set ${\cal M}_2$) in each iteration of (Steps \ref{stepb} to \ref{stepbend}) be $l_i$. Each `new' node is compared with the set of all intermediate nodes with unknown parents (${\cal M}_1 \supseteq {\cal M}_2$) first. As addition of a `new' node leads to removal of its children ($\geq 1$) from ${\cal M}_1$, the size of ${\cal M}_1$ never increases more than its initial value ($\leq l_0/2$) giving this step a worst case complexity of $O(l_il_0)$. Comparison of nodes in ${\cal M}_i$ with missing nodes in ${\cal M}- {\cal M}_1$ and leaf nodes in $\cal L$ (Step \ref{stepb2}) has worst case complexity $O(l_il_0(N-l_0))$. Using $l_0$ as $O(N)$ and $\sum l_i = O(N)$, the computational complexity of all iterations in Steps \ref{stepb} to \ref{stepbend} is thus $O(N^3)$. Finally composing the post-order tree traversal and checking at node location ($O(N)$) for each leaf ($O(N)$) has complexity less than or equal to $O(N^2)$. The overall complexity of the algorithm is thus $O(N^3)$. 

\textbf{Maximum Missing Node Fraction:} Consider the two radial graphs given in Fig.~\ref{fig:maxcases} that satisfy the condition in Assumption $2$. The first is a binary tree of depth $d$. All nodes till depth $d-1$ are missing. The second graph has the structure of a line graph with one additional leaf node connected to each intermediate node on the line. In either graph, as the number of nodes increases, the fraction of missing nodes increases to $50\%$. In other words, \emph{configurations exist such that Algorithm $1$ is able to learn the grid structure with approximately half the nodes missing/unobserved.}

\begin{figure}[!bt]
\centering
\hspace*{\fill}
\subfigure[]{\includegraphics[width=0.13\textwidth]{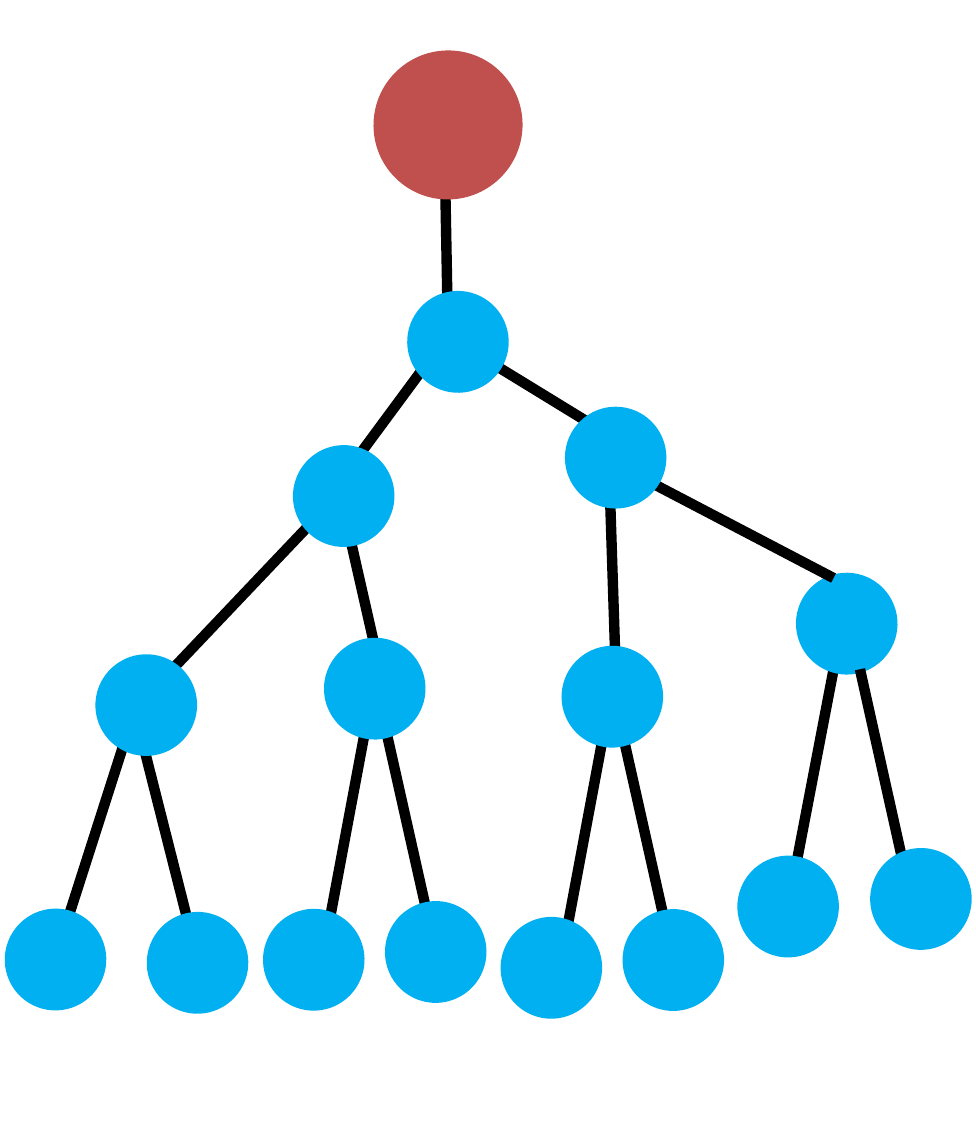}}\hfill
\subfigure[]{\includegraphics[width=0.09\textwidth,height=0.16\textwidth]{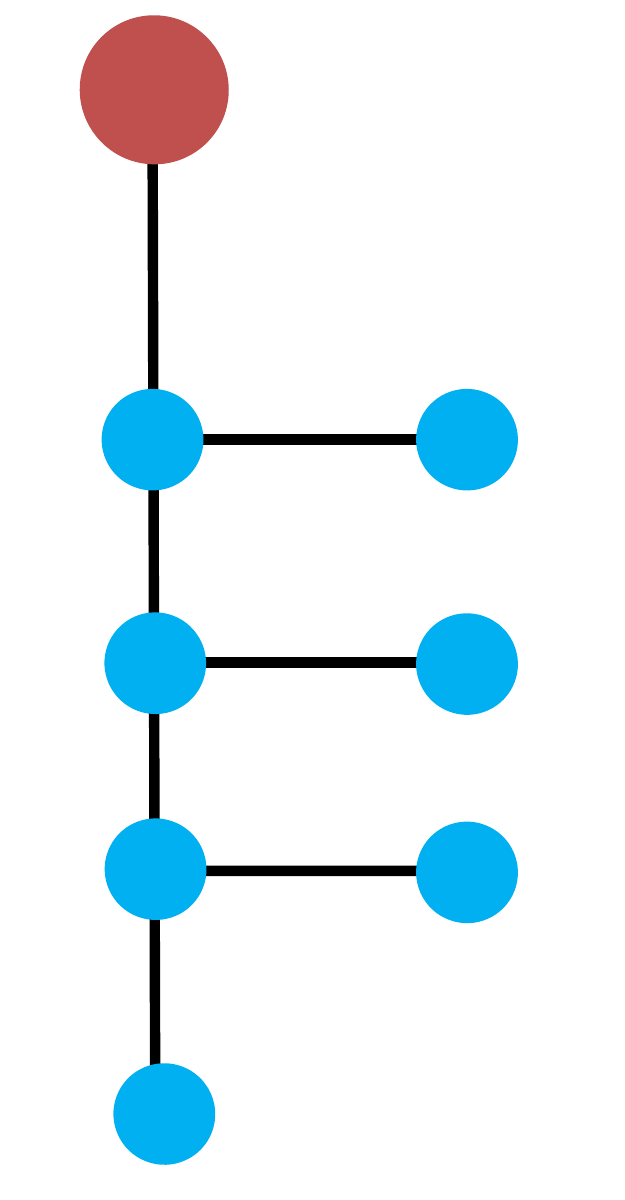}}
\hspace*{\fill}
\squeezeup
\caption{Cases where fraction of missing nodes becomes $50\%$ (a) A binary distribution grid tree with each intermediate node having degree $3$. (b) Distribution grid tree with each intermediate node with one child
\label{fig:maxcases}}
\end{figure}

This is crucial because $50\%$ fraction of missing nodes represents the threshold beyond which state estimation is not possible even in the presence of knowledge of the operational topology. This is because the LC-PF Eqs.~(\ref{LCPF_p}) cannot be solved if less than half the nodes are observed.

\section{Experiments}
\label{sec:experiments}
Here we demonstrate performance of Algorithm $1$. We consider a radial network \cite{testcase2,radialsource} (Fig.~\ref{fig:case}) with $20$ nodes and one substation. Out of those $20$ nodes, $12$ are terminal/leaf nodes while $8$ are intermediate nodes which are missing. In each of our simulation runs, we construct complex power injection samples at the non-substation nodes from a multivariate Gaussian distribution that is uncorrelated between different nodes as per Assumption $1$. Next we use LC-PF Eqs.~(\ref{LC_PF}) to generate nodal voltage magnitude measurements. To understand the performance of our algorithm, we introduce $30$ additional edges (at random) to construct the loopy edge set ${\cal E}$. The additional edges are given impedances comparable to those of operational lines. The input to the observer consists of voltage magnitude measurements and injection statistics at the terminal nodes and impedances of all the lines within the loopy edge set $\cal E$.
The average fraction errors (number of errors/size of the operational edge set) in reconstructing the grid structure is shown in Fig.~\ref{fig:plot} for different sizes of terminal voltage magnitude measurements used. Different curves in the plot depict different values of tolerances $\tau_1$ and $\tau_2$ to check the correctness of Eq.~(\ref{equal3}) and Eq.~(\ref{equaltriple}) in Algorithm $1$. Note that the average fractional errors decreases steadily with increase in the number of measurements as empirical second moment statistics used in Algorithm $1$ get more accurate. The values of tolerance to achieve the most accurate results are selected manually. We plan to develop a theoretical selection of the optimal tolerance values in future work.

\begin{figure}[!bt]
\centering
\includegraphics[width=0.356\textwidth,height =.16\textwidth]{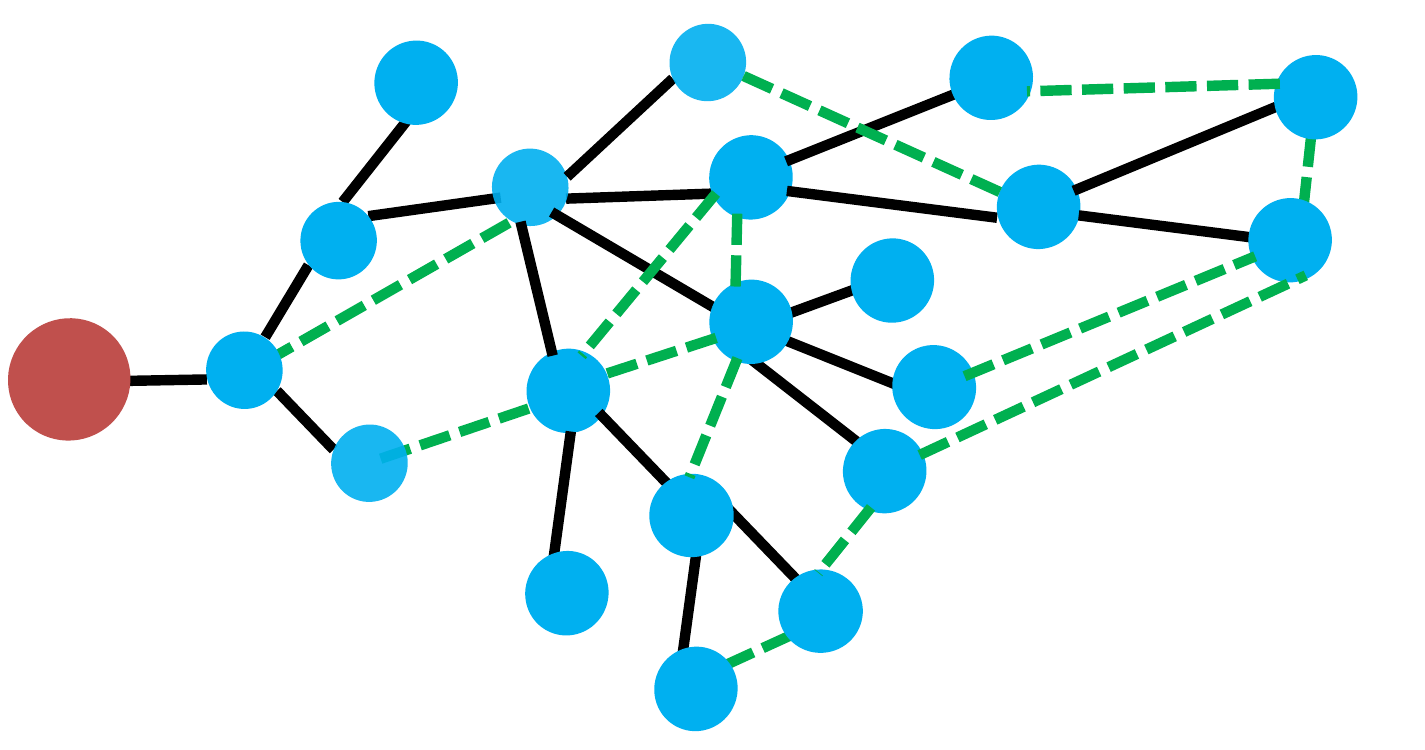}
\vspace{-.10cm}
\caption{Layouts of the grid tested with $20$ non-substation nodes. Black lines represent operational edges. Some of the additional open lines (actual number $30$) are represented by dotted green lines.}
\label{fig:case}
\vspace{-2mm}
\end{figure}

\begin{figure}[!bt]
\centering
\subfigure[]{\includegraphics[width=0.39\textwidth,height = .35\textwidth]{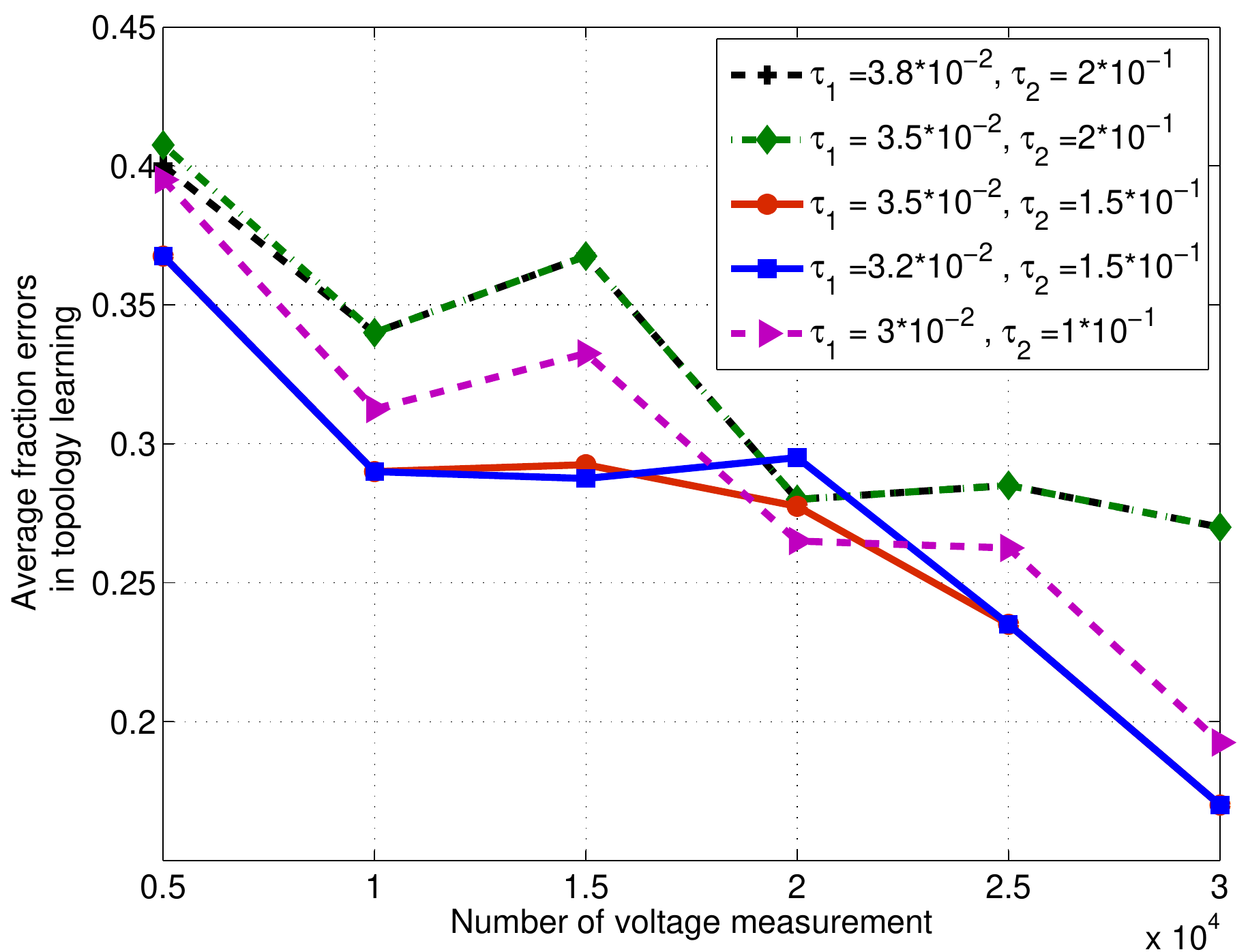}\label{fig:plot}}
\squeezeup
\caption{Average fractional errors in learning operational edges vs number of samples used in Algorithm $1$ for different values of tolerances $\tau_1$ and $\tau_2$.
\label{fig:algo2}}
\end{figure}

\section{Conclusions}
\label{sec:conclusions}
Topology learning is an important problem in distribution grids as it influences several other control and managements applications that form part of the smart grid. The learning problem is complicated by the low placement of real-time measurements in the distribution grid, primarily being placed at terminal nodes that represent end-users or households. In this paper, we study the problem of learning the operational radial structure from a dense underlying loopy grid graph in this realistic regime where only terminal node (end-user) voltage measurements and injection statistics are available as input and all other nodes are unobserved/missing. We use a linear-coupled power flow model and show that voltages terminal node pairs and triplets satisfy relations that depend on their individual injection statistics and impedances on the lines connecting them. We use these properties to propose a learning algorithm that iteratively learns the grid structure from the leaves onward towards the substation root. We show that the algorithm has polynomial time complexity comparable to existing work, despite being capable of tolerating much greater and realistic fraction of missing nodes. For specific cases, this algorithm is capable of learning the structure with $50\%$ unobserved nodes, beyond which state estimation is not possible even in the presence of topology information. We demonstrate performance of the learning algorithm through experiments on distribution grid test case. Computing the sample complexity of the learning algorithm and optimizing the selection of tolerance values based on the number of samples are directions of our future research in this area. Further, we plan to expand this learning regime to cases with correlated injections.
\bibliographystyle{IEEEtran}
\bibliography{FIDVR,SmartGrid,voltage,trees}
\end{document}